\documentclass{amsart}

\usepackage{amsmath,amssymb,amsfonts}
\usepackage[english]{babel}
\usepackage{amsxtra}
\usepackage{mathtools}
\usepackage[normalem]{ulem}
\usepackage[all,cmtip]{xy}
\usepackage[colorlinks=true]{hyperref}
\usepackage{pgf,tikz-cd}
\usetikzlibrary{arrows}

\title[Conic bir]{Remarks on the group of birational selfmaps of a conic fibration}

\author{Enrica Floris}

\newtheorem{theorem}{Theorem}[section]
\newtheorem{lemma}[theorem]{Lemma}

\newtheorem{corollary}[theorem]{Corollary}
\theoremstyle{definition}
\newtheorem{definition}{Definition}[section]

\newtheorem{notass}[definition]{Notation/Assumption}
\theoremstyle{remark}

\numberwithin{equation}{section}

\newcommand{\IC}{\mathbb C}

\newcommand{\IP}{\mathbb P}

\newcommand{\IZ}{\mathbb Z}

\newcommand{\cO}{\mathcal{O}}

\newcommand{\orth}{\mathrm{O}}
\newcommand{\GL}{\mathrm{GL}}
\newcommand{\Hilb}{\mathrm{Hilb}}

\DeclareMathOperator{\Aut}{Aut}

\DeclareMathOperator{\Bir}{Bir}
\DeclareMathOperator{\Autz}{Aut^{\circ}}

\thanks{EF thanks the Institut Universitaire de France for the excellent working conditions. She warmly thanks J\'er\'emy Blanc and Thomas Dedieu for many useful conversations, Hsueh-Yung Lin and Christian Urech for comments on a previous version of this note and J\'anos Koll\'ar for comments and for suggesting the statement of Theorem \ref{thm:spinor} and some references.}

\begin{document}

\maketitle

\begin{abstract}
We study the group of birational selfmaps of a variety birational to a conic bundle.
We prove that it admits a surjective morphism to the direct sum of an uncountable number of copies of $\mathbb Z/2\IZ$.
\end{abstract}

\section{Introduction}
The group of birational self-maps of a projective variety is a very rich birational invariant.
If $X$ is a rationally connected variety, group-theoretic properties of $\Bir(X)$ should mirror the geometry of $X$.
One striking instance of this, is the recent result in \cite{RUvS25a}: if a variety $X$ is such that $\Bir(X)\cong \Bir(\IP^n)$, then $X$ is a rational $n$-dimensional variety, see
 \cite{CERUvS} for more results in this direction.
In this note, we focus on groups of birational selfmaps of varieties birational to conic fibrations.

\smallskip

The aim of this note is to prove the following result.

\begin{theorem}\label{thm:Intro}
Let $X, Y$ be projective varieties, let $f\colon X\to Y$ be a fibration whose general fibre 
is a rational curve. Then there is an uncountable set $I$ and a surjective homomorphism $\Phi\colon\Bir(X/Y)\to\bigoplus_I\mathbb Z/2\mathbb Z$.
\end{theorem}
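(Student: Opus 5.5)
We reduce to the generic fibre. Let $K=\IC(Y)$ be the function field of $Y$ (the ground field is $\IC$, or more generally algebraically closed of characteristic zero). The generic fibre $X_\eta$ is a smooth projective conic $C$ over $K$, and $\Bir(X/Y)\cong\Bir(C)=\Aut_K(C)$. Since $C$ is a smooth conic, $\Aut_K(C)\cong \mathrm{SO}(q)(K)\cong \mathrm{PGL}$-type group: concretely, writing $C=\{q=0\}$ with $q$ a ternary quadratic form over $K$, we have $\Aut(C)\cong \orth(q)(K)/\{\pm1\}\cong \mathrm{SO}(q)(K)$. If $C$ has a rational point, this group is $\mathrm{PGL}_2(K)$. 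We will build $\Phi$ from a determinant or spinor-norm-type homomorphism to $K^*/K^{*2}$ and then show that the image is an infinite-dimensional $\mathbb F_2$-vector space of uncountable dimension. A homomorphism onto an $\mathbb F_2$-vector space $V$ of dimension $|I|$ is then a homomorphism onto $\bigoplus_I\IZ/2\IZ$.

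\textbf{Split case.} We first treat $C\cong\IP^1_K$. Here $\Bir(X/Y)\cong\mathrm{PGL}_2(K)$, and $\det\colon \GL_2(K)\to K^*$ descends to
\[
\mathrm{PGL}_2(K)\to K^*/K^{*2}.
\]
This map is surjective, since $\mathrm{diag}(a,1)$ maps to the class of $a$. It remains to show that $K^*/K^{*2}$ has uncountable dimension over $\mathbb F_2$. Because $\dim Y\ge1$, choose a dominant rational map $Y\dashrightarrow\IP^1$; this gives an embedding $\IC(t)\subset K$. Take $K$ to be a finitely generated extension of $\IC(t)$, by a transcendence basis followed by a finite extension. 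The elements $t-c$ for $c\in\IC$ are independent in $\IC(t)^*/\IC(t)^{*2}$ by unique factorisation. We need them, or a modification of them, to stay independent in $K^*/K^{*2}$. The kernel of $\IC(t)^*/\IC(t)^{*2}\to K^*/K^{*2}$ comes from quadratic subextensions of the finitely generated field $K$ over $\IC(t)$. After passing to the algebraic closure $L$ of $\IC(t)$ in $K$, which is a finite extension, and using that $K/L$ is purely transcendental-then-finite, this kernel is finite. Equivalently, the divisors $\{t=c\}$ on a model of $Y$ ramify only over finitely many $c$. So uncountably many classes survive, giving $|I|=|\IC|$.

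\textbf{General case.} In general we use the spinor norm $\theta\colon \mathrm{SO}(q)(K)\to K^*/K^{*2}$, defined by sending a product of an even number of reflections $s_{v_1}\cdots s_{v_{2m}}$ to $\prod q(v_i)$. This is the statement referred to as Theorem \ref{thm:spinor}, presumably proved later in the paper. The task is to show that its image is still uncountable-dimensional. The image contains all products $q(v)q(w)$ for anisotropic $v,w$, so it contains $q(v_0)\cdot D(q)$, where $D(q)$ is the set of values of $q$. After scaling, $q=\langle1,-a,-b\rangle$, and the values include $x^2-ay^2$ (take $z=0$). So the image contains the norm group $N(K(\sqrt a)^*)$ modulo squares, and we must show this is uncountable-dimensional. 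Specialising to $y=1$, the elements $c^2-a$ for $c\in\IC$ all lie in the image. Viewing $a$ as a function, the divisors of $c^2-a$ have pairwise distinct components for generic $c$: these are the level sets $a=c^2$, and a generic level set is reduced. A valuation argument along those components, exactly as in the split case, then shows that uncountably many of these elements are independent modulo squares.

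\textbf{Main obstacle.} The hardest step is this independence argument: showing that the image in $K^*/K^{*2}$ has uncountable dimension. I would carry it out with divisorial valuations on a normal projective model of $Y$. For each $c$ outside a countable set, choose a prime divisor $D_c$ with $v_{D_c}(c^2-a)$ odd and $v_{D_c}(c'^2-a)=0$ for $c'\neq c$; a generic fibre of $a\colon Y\dashrightarrow\IP^1$ is reduced and gives such a divisor. The parities of these valuations then detect independence, and $\Phi$ is the spinor norm composed with an identification $V\cong\bigoplus_I\IZ/2\IZ$.
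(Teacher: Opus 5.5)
Your argument is correct, but it proves the theorem by a genuinely different route from the paper.

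\textbf{The paper's route.} The paper does not use quadratic forms in this proof. After Lemma \ref{lem:deviss} it takes the homomorphism of \cite{BLZ}. Its coordinates are equivalence classes of marked conic bundles $(X/Y,\Gamma)$, and it sends the type II link blowing up a codimension-two $Z$ to the class of $(X/Y,f(Z))$. The paper then shows that this index set is uncountable. It does so by proving that general complete intersections $H_1\cap H_2$ in a fixed linear system are pairwise non-birational (Lemma \ref{lem:nonisotr}, via the isotriviality Lemma \ref{lem:isotr} and canonical models).

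\textbf{Your route.} You pass to $\Bir(X/Y)\cong\Aut_K(X_\eta)\cong\mathrm{SO}(q)(K)$ and take the spinor norm, or $\det$ in the split case. A valuation count along reduced general fibres of a nonconstant function then shows that the image in $K^*/K^{*2}$ has rank at least $|\IC|$, and a choice of basis identifies it with $\bigoplus_I\IZ/2\IZ$.

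\textbf{What each buys.} Your argument is shorter and uses only classical facts on quadratic forms plus generic smoothness. It also works for $\dim Y=1$, where Lemma \ref{lem:nonisotr} (which assumes $\dim Y\geq 2$) is unavailable. What it does not give is the stronger statement the paper is after. By Theorem \ref{thm:spinor}, the \cite{BLZ} map equals $\xi\circ\overline{\mathrm{div}}\circ\theta$. This is a further quotient of your map that identifies all divisors yielding equivalent marked conic bundles. So uncountable rank of $\theta(\Bir(X/Y))$ says nothing about the size of the \cite{BLZ} index set. Moreover, only the \cite{BLZ} map is built from Sarkisov links, so only it is relevant beyond $\Bir(X/Y)$ and to Corollary \ref{cor:ubdd gen}; the spinor norm lives only on $\Bir(X/Y)$.

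\textbf{Small repairs.}
\begin{itemize}
\item The well-definedness of the spinor norm is the classical result of \cite[Section 55]{OMeara73}, not the content of Theorem \ref{thm:spinor}.
\item Your general case should be stated for anisotropic $q$. Then $a\notin K^{*2}\supseteq\IC^*$, so $a$ is nonconstant.
\item The elements $c^2-a$ should be indexed by $c^2$ rather than by $c$, since $c$ and $-c$ give the same element.
\end{itemize}
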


Theorem \ref{thm:Intro} is proved essentially in \cite{BLZ},
where infinitely many involutions of a conic bundle were produced, and those are distinguished by their indeterminacy locus $\Gamma$. Our key observation is that we can find uncountably many such $\Gamma$.
We prove that in section \eqref{sec:thmIntro}.

The homomorphism $\Phi$ has been defined in \cite[Theorem B and D]{BLZ} in the following way.
The set $I$ parametrises equivalence classes of marked conic bundles $(X/Y,\Gamma)$ defined in \cite[Definition 3.22]{BLZ} where $\Gamma\subseteq Y$ is a divisor.
Let $\tau \colon X\dasharrow X$ be a type II link 
$$
\xymatrix{
X'\ar[d]_{\varepsilon}\ar@{-->}[r]& X''\ar[d]^{\varepsilon'}\\
X\ar[d]_f &X\ar[d]^f\\
Y\ar@{=}[r]&Y
}
$$
where $\varepsilon$ is the blow up of a codimension 2 subvariety $Z$ such that $f\vert_Z$ has degree 1 onto the image. Then $\Phi(\tau)$ is the sequence having a 1 in correspondence with the class of $(X/Y, f(Z))$ and zeroes elsewhere.

Recall that for a conic fibration $f\colon X\to Y$ the group $\Bir(X/Y)$ is isomorphic to $\Aut(X_\eta)$ where $X_\eta$ is the generic fibre of $f$, and $\Aut(X_\eta)$ is an orthogonal group over $\mathbb C(Y)$, where $\mathbb C(Y)$ is the field of rational functions of $Y$.
In this note we also give an interpretation of $\Phi$ in terms of the spinor norm homomorphism (see \cite[Section 55]{OMeara73}). 

\begin{theorem}\label{thm:spinor}
Let $X, Y$ be projective varieties, let $f\colon X\to Y$ be a fibration whose general fibre 
is a rational curve. 
Let $\Phi$ be the homomorphism of \cite[Theorem B and D]{BLZ} and Theorem \ref{thm:Intro}.
Then there is a factorization $\Phi=\xi\circ\overline{\mathrm{div}}\circ \theta$
where \begin{itemize}
\item $\overline{\theta}\colon \Bir(X/Y)\to  \mathbb C(Y)^*/(\mathbb C(Y)^*)^2$ is the spinor norm homomorphism;
\item if $\mathcal P$ is the set of prime divisors of $Y$, then $\overline{\mathrm{div}}\colon \mathbb C(Y)^*/(\mathbb C(Y)^*)^2\to \bigoplus_{\mathcal P}\mathbb Z/2\mathbb Z$ sends the class of a rational map $\varphi$ with $\mathrm{div}(\varphi)=\sum_{P\in\mathcal P} n_P P$ to $\sum_{P\in\mathcal P} \bar n_P P$ where for an integer $n$, we denote by $ \bar n$ its class modulo 2;
\item $\xi\colon \bigoplus_{\mathcal P}\mathbb Z/2\mathbb Z\to \bigoplus_I\mathbb Z/2\mathbb Z$ is the projection to the quotient.
\end{itemize}
\end{theorem}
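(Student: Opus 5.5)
Both sides of $\Phi=\xi\circ\overline{\mathrm{div}}\circ\theta$ are group homomorphisms $\Bir(X/Y)\to\bigoplus_I\mathbb Z/2\mathbb Z$. It therefore suffices to check the equality on a generating set of $\Bir(X/Y)$. The generating set I would use is the one implicit in \cite{BLZ}.

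First, I would reduce to a standard model. After a birational modification over $Y$, and replacing $Y$ by a birational model if needed (this changes neither $\Bir(X/Y)\cong\Aut(X_\eta)$ nor the relevant quotient), we may assume $X\to Y$ is a standard conic bundle. Its generic fibre is then the conic $q=ax^2+by^2+cz^2$ over $K=\mathbb C(Y)$, and $\Aut(X_\eta)=\mathrm{PGL}$-type orthogonal group $\mathrm{SO}(q)$. The spinor norm $\theta\colon \mathrm{SO}(q)\to K^*/(K^*)^2$ is defined as follows. By Cartan--Dieudonné, every element is a product of an even number of reflections $s_{v_1}\cdots s_{v_{2k}}$, and $\theta$ sends it to $\prod q(v_i)$ mod squares. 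It is well-defined since $q$ is nondegenerate.

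Second, I would identify generators and compute both sides on them. By the Sarkisov program relative to $Y$ (as used in \cite{BLZ}), $\Bir(X/Y)$ is generated by automorphisms of conic bundle models and type II links $\tau$ centred at codimension-two $Z$ with $f|_Z$ birational onto a divisor $\Gamma=f(Z)$. For such a link, $\Phi(\tau)$ is the indicator of the class of $(X/Y,\Gamma)$. Generically over $\Gamma$, $\tau$ is an elementary transformation, which I would realize on $X_\eta$ as a product of two reflections, or as $-1$ times a reflection $s_v$. I expect that the square class of $\theta(\tau)$ is then represented by a function whose divisor has odd multiplicity exactly along $\Gamma$, plus components that are discriminant components or cancel in the quotient defining $I$. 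Concretely, a link centred at a section-like $Z$ over $\Gamma$ corresponds to $v$ with $q(v)$ vanishing to odd order along $\Gamma$. The elements of $\Bir$ that are regular over a general point of every divisor, meaning automorphisms of some model, should have spinor norm that is a unit at every prime divisor up to squares, except along discriminant components. I would arrange that $I$ is precisely $\mathcal P$ modulo these discriminant and "trivial" directions, so that $\xi$ kills them. This step uses the description of the equivalence of marked conic bundles in \cite[Definition 3.22]{BLZ}.

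The main obstacle is this local computation. I need to check, via the valuation $v_\Gamma$ and a local model of the conic over the DVR $\mathcal O_{Y,\Gamma}$, that an element of $\mathrm{SO}(q)(K)$ has spinor norm of odd valuation at $\Gamma$ if and only if its action on the Bruhat--Tits tree of $\mathrm{PGL}_2(K_\Gamma)$, whose vertices are the smooth conic models over $\mathcal O_{Y,\Gamma}$, has odd displacement parity. The parity of the number of elementary links over $\Gamma$ is exactly the $\Gamma$-coordinate of $\Phi$ in \cite{BLZ}. This is the classical fact that $\mathrm{PGL}_2\to K^*/(K^*)^2\xrightarrow{v}\mathbb Z/2$, given by $\det$ followed by valuation, measures the type parity of vertices of the tree. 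Here $\theta$ corresponds to $\det$ under $\mathrm{SO}(q)\cong\mathrm{PGL}_2$ when $q$ is split at $\Gamma$. Once this is established, both homomorphisms agree coordinatewise, which gives the factorization.
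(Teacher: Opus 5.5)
Your route is genuinely different from the paper's, and it is sound in outline.

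The paper argues through generators:
\begin{itemize}
\item $\orth(F^3)$ is generated by symmetries and surjects onto $\Bir(X/Y)$;
\item Lemma~\ref{lem:symmetries} identifies a symmetry $\sigma_v$ with the involution $\tau_s$ of projection from the rational section $s$ defined by $v$;
\item Lemma~\ref{lem:spinorII} gives $\theta(\tau_s)=\overline{Q\circ s}$;
\item the compactification $\overline{\mathcal E}$ shows that, modulo $2$, $\mathrm{div}(Q\circ s)$ is the image of $\overline{s(Y)}\cap\{Q=0\}$, since the poles along $\Delta$ have even order;
\item \cite[Lemma 6.7]{BLZ} converts that multiplicity into the parity of $\tau_s$ along $\Gamma$.
\end{itemize}

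You instead compare $v_\Gamma\circ\theta$ with the $\Gamma$-parity on all of $\Bir(X/Y)$ at once. You do this through the bipartite Bruhat--Tits tree and the identity $\theta(\mathrm{Ad}\,g)=\overline{\det g}$ on $\mathrm{SO}(q)\cong\mathrm{PGL}_2$. This makes your generator paragraph superfluous, which is just as well. Its claims are only ``expected'', and a type II link $X\dashrightarrow X'$ is not itself an element of $\Bir(X/Y)$.

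Your approach has two advantages. First, it handles every order of vanishing of $Q\circ s$ uniformly, whereas \cite[Lemma 6.7]{BLZ} is stated only for multiplicity $0$ or $1$. Second, by working in $\mathrm{SO}(q)$ you avoid a problem the paper passes over: $\sigma_v$ and $-\sigma_v$ (the element of $\mathrm{SO}(q)$ that actually induces $\tau_s$) have spinor norms differing by the discriminant class. The paper silently leaves that discrepancy to $\xi$. The paper's approach buys concreteness instead: it ties $\theta$ directly to the indeterminacy locus $s(Y)\cap X$ of the involutions used elsewhere in the paper.

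You need to add two things.

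First, the case you skipped. If the residual conic over $\IC(\Gamma)$ has no rational point (possible once $\dim Y\geq 3$), then $q$ stays anisotropic over the completion and there is no $\mathrm{PGL}_2$ to use. In that case no link is centred over $\Gamma$. Moreover, writing $q$ diagonally with unit coefficients at $\Gamma$, every $q(v)$ with $v\neq 0$ has even valuation (rescale $v$ to be primitive). So both coordinates vanish.

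Second, $I$ is not yours to ``arrange''. The map $\xi$ has to be read as the one dictated by \cite{BLZ}: it kills discriminant components and divisors of small covering gonality, and sums over equivalent markings.

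Like the paper, you also rely on \cite{BLZ} for the fact that the $\Gamma$-coordinate of $\Phi$ equals the parity of the number of elementary transformations over $\cO_{Y,\Gamma}$.
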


Theorem \ref{thm:Intro} implies in particular that the group $\Bir(X)$ is uncountable.
The uncountability follows much more directly from the  fact that $\Bir(X/Y)$ is isomorphic to an orthogonal group over $\mathbb C(Y)$.

\smallskip

Conic bundles are one of the outcomes of the MMP applied to a variety with non-pseudoeffective canonical divisor and there are several examples of rationally connected varieties which are birational to a conic bundle but are not rational, for example the cubic threefold and the stably rational, but not rational, threefold of \cite{BCTSSD85}.

By a result of Rosenlicht (see \cite[Proposition 1.9]{FFZ} and \cite[Proposition 2.5.1]{BFT22}),  if $X$ is a rationally connected and non-rational variety of dimension 3, then $\Autz(X)$ is trivial. It might be tempting to conjecture that in this case $\Bir(X)$ is discrete, but by the uncountability of the group of birational selfmaps of a conic bundle this is not true and we have the following.

\begin{corollary}
Let $X$ be a rationally connected projective variety of dimension 3. Assume that
$X$ is birational to a conic bundle and $X$ is not rational.
Then $\Bir(X)$ is uncountable and for every variety $Y$ birational to $X$ we have $\Autz(Y)=\{id_Y\}$.
\end{corollary}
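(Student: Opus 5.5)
The Corollary has two claims: $\Bir(X)$ is uncountable, and $\Autz(Y)$ is trivial for every $Y$ birational to $X$. I will treat them separately, since they are essentially independent.

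\emph{Uncountability.} By assumption $X$ is birational to a conic bundle $f\colon X'\to S$, where $S$ is a surface and the general fibre of $f$ is a rational curve. Since $\Bir(X)\cong\Bir(X')$ via conjugation by the birational map $X\dashrightarrow X'$, it suffices to work with $X'$. We have the inclusion $\Bir(X'/S)\subseteq\Bir(X')$. By Theorem \ref{thm:Intro} there is a surjective homomorphism $\Phi\colon\Bir(X'/S)\to\bigoplus_I\mathbb Z/2\mathbb Z$ with $I$ uncountable. The target is uncountable: the elements with a single nonzero coordinate are distinct for distinct indices in $I$. Hence $\Bir(X'/S)$, and therefore $\Bir(X)$, is uncountable. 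This step is formal once Theorem \ref{thm:Intro} is available. Alternatively, as noted in the introduction, one can use that $\Bir(X'/S)\cong\Aut(X'_\eta)$ is an orthogonal group over the uncountable field $\mathbb C(S)$, which already contains uncountably many elements.

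\emph{Triviality of $\Autz$.} Let $Y$ be any variety birational to $X$. Being rationally connected and being non-rational are both birational invariants, so $Y$ is again a rationally connected, non-rational variety of dimension $3$. The result of Rosenlicht recalled above (\cite[Proposition 1.9]{FFZ}, \cite[Proposition 2.5.1]{BFT22}) then gives $\Autz(Y)=\{id_Y\}$.

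Two points need a brief check. First, Rosenlicht's argument applies to $Y$ itself, even if $Y$ is not projective; if the cited statement is only for projective varieties, I would reduce to that case. The sketch of the reduction: a nontrivial connected algebraic group $G$ acting faithfully on $Y$ contains either a $\mathbb G_a$ or a $\mathbb G_m$. By Rosenlicht's theorem, $Y$ is then birational to $C\times\IP^1$, where $C$ is a rational quotient of dimension $2$. Since $Y$ is rationally connected, $C$ is rationally connected, hence a rational surface. It follows that $Y$ is rational, which is a contradiction. Second, I would check that $X$ has dimension $3$, so that the base $S$ really is a surface; this holds because the fibres of $f$ are curves. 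I do not expect a genuine obstacle here. The only care needed is in matching the hypotheses of the cited results, in particular projectivity or completeness for $Y$.
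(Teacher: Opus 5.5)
Your argument is correct and is essentially the paper's. Uncountability comes from the surjection of Theorem \ref{thm:Intro}, or more directly from $\Bir(X'/S)\cong\Aut(X'_\eta)$ being an orthogonal group over $\IC(S)$. The triviality $\Autz(Y)=\{id_Y\}$ comes from applying the cited Rosenlicht-type result to $Y$, using that rational connectedness and non-rationality are birational invariants.

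One correction to your optional fallback sketch. A nontrivial connected algebraic group need not contain $\mathbb G_a$ or $\mathbb G_m$, since it may be an abelian variety. That case has to be excluded separately, for instance by Nishi--Matsumura, using that a rationally connected model has trivial Albanese variety.
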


In \cite{BF13} and \cite{RUvS25b} a study of the topology on $\Bir(\IP^n)$ and $\Bir(X)$ was carried. We revisit \cite[Theorem B]{BLZ} and Theorem \ref{thm:Intro} from this topological point of view and get the following analog of the results in Hudson and Pan \cite{Pan99}.

\begin{corollary}\label{cor:ubdd gen}
Let $X, Y$ be projective varieties, let $f\colon X\to Y$ be a fibration whose general fibre 
is a rational curve. Assume that $\dim Y\geq 2$ Then there is no set of generators of $\Bir(X)$ of bounded degree.

More precisely, for every very ample line bundle $A$ on $Y$ there is an integer $d_0$ such that the generators of $\Bir(X)$ include a countable (infinite) number of algebraic families parametrised by global sections of $dA$ for $d\geq d_0$.
\end{corollary}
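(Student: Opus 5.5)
The corollary follows by combining \cite[Theorem B]{BLZ}, which describes $\Phi$ on the type II links, with Theorem \ref{thm:Intro}; the proof then reduces to a degree count. I will first reduce to $\Bir(X/Y)$. Suppose $S$ is a generating set of $\Bir(X)$ whose elements all have degree at most $D$, with degree measured, say, with respect to a fixed ample class on a fixed model. I want to push the surjection $\Phi$ of Theorem \ref{thm:Intro} to a quotient of a subgroup of $\Bir(X)$. Since $\Phi$ is only defined on $\Bir(X/Y)$, I will instead use the version in \cite[Theorem B]{BLZ}. That version gives a homomorphism from all of $\Bir(X)$ to a free product of copies of $\mathbb Z/2\mathbb Z$, indexed by classes of marked conic bundles, whose restriction to $\Bir(X/Y)$ projects to $\Phi$. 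Composing with abelianisation gives a surjection $\Psi\colon\Bir(X)\to\bigoplus_{I'}\mathbb Z/2\mathbb Z$, where $I'$ is uncountable: it contains the uncountable set $I$ built in Section \ref{sec:thmIntro}, here using $\dim Y\geq 2$.

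The key step is to show that an element of bounded degree has image under $\Psi$ supported on indices whose divisors $\Gamma\subset Y$ have bounded degree. I plan to argue through the Sarkisov decomposition from \cite{BLZ}. A birational map $\varphi$ of degree at most $D$ decomposes into links whose centres, in particular the curves or codimension-$2$ loci $Z$ blown up in type II links, have degree bounded in terms of $D$. So the divisors $f(Z)=\Gamma$ have degree at most some $e(D)$ with respect to $A$. Then $\Psi(\varphi)$ is supported on classes $(X/Y,\Gamma)$ with $\deg_A\Gamma\leq e(D)$. Divisors of bounded degree form finitely many algebraic families (Chow/Hilbert scheme). The equivalence classes of \cite[Definition 3.22]{BLZ} of the corresponding marked conic bundles should form a set whose complement in $I'$ is nonempty. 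Indeed, the classes constructed in Section \ref{sec:thmIntro} come from divisors $\Gamma\in|dA|$ with $d$ arbitrarily large, and equivalence does not collapse degrees below a fixed bound. Then $\Psi(\langle S\rangle)$ lies in the proper subgroup $\bigoplus_{\deg\leq e(D)}\mathbb Z/2\mathbb Z$, contradicting surjectivity.

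For the refined statement, I will take the involutions of \cite{BLZ} attached to $\Gamma$ in the linear system $|dA|$, for $d\geq d_0$ so that the general member is smooth, irreducible, and the associated double cover of $\Gamma$ is nontrivial. These form an algebraic family parametrised by an open subset of $H^0(Y,dA)$. Because the projection to the coordinate of $\Gamma$ is nonzero only for links over $\Gamma$, every generating set must contain, for each class, an element mapping nontrivially to it. Running over $d\geq d_0$ gives countably many families that must be hit.

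The main obstacle is the degree bound on the centres of the links in a decomposition of a map of bounded degree. The Sarkisov program does not obviously control degrees. I would instead bypass decompositions: I would read $\Gamma$ directly off $\varphi$, as the divisors over which $\varphi$ contracts or extracts a component of $f^{-1}(\Gamma)$. For maps in $\Bir(X/Y)$, these are the components of the divisor of the spinor norm with odd multiplicity, by Theorem \ref{thm:spinor}, and their degree is bounded by the degree of the base locus of $\varphi$.
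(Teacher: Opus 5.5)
\paragraph{Where the argument breaks.} The key step is not established, and your proposed bypass does not close it, for two reasons.

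First, the invariant is the wrong kind. The indices of $\Psi$ are equivalence classes of marked conic bundles, i.e.\ $\Gamma$ up to birational maps of conic bundles. Such maps may act on the base or change it, so $\deg_A\Gamma$ is not attached to a class. For example, on $\IP^1\times\IP^2\to\IP^2$ the lift of a quadratic Cremona map of the base sends a conic through the three base points to a line. Nor can you conclude by counting. Divisors of degree $\le e(D)$ form positive-dimensional, hence uncountable, families, just like $I$. So ``finitely many algebraic families'' does not show that $\bigoplus_{\deg\le e(D)}\mathbb Z/2\mathbb Z$ is a proper subgroup; your sentence ``equivalence does not collapse degrees below a fixed bound'' is precisely the unproved point.

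Second, the spinor-norm description of Theorem \ref{thm:spinor} only concerns $\Bir(X/Y)$. A generating set of $\Bir(X)$ may contain elements not preserving $f$. For those, a contracted hypersurface birational to $\IP^1\times\Gamma$ need not be of the form $f^{-1}(\Gamma)$, so $\Gamma$ has no degree in $Y$ at all. You would still need the input from \cite{BLZ} that a nonzero coordinate of $\Psi(\varphi)$ at the class of $\Gamma$ forces $\varphi$ to contract such a hypersurface.

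\paragraph{The missing idea.} Replace the degree of $\Gamma$ by a birational invariant that is bounded on bounded families, namely the covering gonality. It is constant on equivalence classes and independent of the model, it satisfies $\mathrm{covgon}(\Gamma)\le\deg\Gamma$, and it can be made arbitrarily large for the $\Gamma$ of \cite[Proposition 6.9]{BLZ}.

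This is what the paper does, and it makes degree control on Sarkisov links unnecessary. A set of bounded degree lies in finitely many $\Hilb_{P_i}$. For each $P$, take the exceptional locus $\mathbb E$ of $(p_2,u)\colon\mathcal U_P\to X\times\Hilb_P$. Let $d(P)$ be the supremum of $\mathrm{covgon}(\Gamma)$ over components of $\mathbb E_\varphi$, $\varphi\in\Hilb_P$, that are birational to $\IP^1\times\Gamma$. This supremum is a maximum because $\Hilb_P$ is quasi-projective, while the integers $d(P)$ are unbounded by \cite[Proposition 6.9]{BLZ}.

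For the refined statement you likewise need Lemma \ref{lem:nonisotr}. It guarantees that general members of each family give pairwise non-birational $\Gamma$, and non-birational $\Gamma\times\IP^1$, hence distinct classes that a generating set must hit separately. Your sketch asserts that these families ``must be hit'' without justifying why distinct members, or distinct $d$, are forced.
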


\section{Preliminaries}
We work over $\IC$. For a field $F$ we denote by $F^*$ the set $F\setminus\{0\}$ with the structure of a multiplicative group.

\subsection{Birational involutions of conic bundles}\label{sseq:involutions}
In \cite{BLZ} the following involutions of conic bundles were defined.

Let Y be a smooth variety, $\eta\colon P\to Y$ a locally trivial $\IP^2$-bundle,
and $X\subseteq P$ a closed subvariety such that the restriction $f$ of $\eta$ is a conic bundle. Let $s\colon Y\dasharrow P$ be a rational section  whose image is not contained in $X$. 
We set $\tau_s\in\Bir(X/Y)$ the birational involution whose restriction to a general fibre $\eta^{-1}(y)$
is the involution induced by the projection from $s(y)$. Let $\Gamma\subseteq Y$ be an irreducible
hypersurface not contained in the discriminant locus of $f$, and let $q$ be a local
equation of $X$ in $P$.

The induced birational map $\tau_s$ is a type II link, obtained as the blow-up of $s(Y)\cap X$ and the contraction of the strict transform of $f^{-1}(f(s(Y)\cap X))$.

\subsection{The orthogonal group}
We review some very classical material on the orthogonal group.
Our reference is the book by O'Meara \cite{OMeara73}.

Let $F$ be a field, $V$ a finite-dimensional vector space over $F$, $B$ a bilinear form and $Q$ the associated quadratic form.

We say that the vector space $V$ is \textit{regular} if we have $$\{y\in V\vert\;\forall x\in V\;B(x,y)=0\}=\{0\}.$$

The \textit{orthogonal group} of a regular vector space $V,Q$ is defined as follows
$$\orth(V)=\{g\in \GL(V)\vert\;\forall x\in V\;Q(x)=Q(gx)\}.$$
We denote by $\Omega(V)$ its commutator and by $\orth^+(V)$ the kernel of the determinant.
Let $v\in V$ be such that $Q(v)\neq 0$. The \textit{symmetry with respect to the vector $v$ } is defined as $$\sigma_v(x)=x-\frac{2B(x,v)}{Q(v)}v.$$
We notice that if $\lambda\in F$ is a non-zero element, then $\sigma_v=\sigma_{\lambda v}$.
Every element of $\orth(V)$ is the product of at most $\dim V$ symmetries \cite[Theorem 43:3]{OMeara73}.
Moreover, if $\sigma_{v_1}\circ\ldots\circ\sigma_{v_\ell}$ and $\sigma_{w_1}\circ\ldots\circ\sigma_{w_m}$ are two factorizations of the same element, then there is $\alpha\in F$ such that $$Q(v_1)\ldots Q(v_\ell)=\alpha^2 Q(w_1)\ldots Q(w_m).$$

The \textit{spinor norm} is the following group homomorphism
$$
\begin{array}{rcc}
\theta\colon \orth(V)&\longrightarrow& F^*/(F^*)^2\\
g&\mapsto& \overline{Q(y_1)\ldots Q(y_\ell)}
\end{array}
$$
where $g=\sigma_{y_1}\circ\ldots\circ\sigma_{y_\ell}$ and $\overline{\beta}$ denotes the class of $\beta\in F^*$ in the quotient.
If $\dim V\in\{1,2,3\}$, then the kernel of $\theta$ coincides with the commutator $\Omega(V)$ (see \cite[Proposition 55:5]{OMeara73}).

\subsection{Topology on $\Bir(X)$}\label{ssec:top}
Let $X$ be a complex projective variety and let us fix a closed embedding $X\subseteq \IP^N$.
Then for every $d\geq 1$ by \cite[Definition 4.1, Lemma 4.2]{RUvS25b} there is 
a set $H_d$ that is locally closed in a projective variety together with a morphism $\pi_d\colon H_d\to \Bir(X)$ with the following properties
\begin{enumerate}
\item $\displaystyle \Bir(X)=\cup_{d\geq1}\pi_d(H_d)$, and $\Bir(X)$ is endowed with the inductive limit topology with respect to the filtration $\pi_i(H_i)\subseteq\pi_{i+1}(H_{i+1})$;
\item for every variety $A$ with a morphism $\rho\colon A\to \Bir(X)$ there is an open affine covering $\{A_i\}_{i\in I}$ and for every $i$ an integer $d_i$ and a morphism $\rho_i\colon A_i\to H_{d_i}$ such that $\displaystyle \rho\vert_{A_i}=\pi_{d_i}\circ \rho_i$.
\end{enumerate}

\begin{definition}
We say that a subset $\mathcal S\subseteq\Bir(X)$ has bounded degree id there is an integer $D$ such that $\mathcal S\subseteq \cup_{d=1}^D\pi_d(H_d)$.
\end{definition}

Moreover, the identification of a birational self-map with its graph 
gives an identification of $\Bir(X)$ with an open subset of $\Hilb(X\times X)$ (see \cite[Proposition 1.7]{Han87}) and for every polynomial $P$ we denote by $\Hilb_P$ the intersection of the connected component of the Hilbert scheme corresponding to $P$ with $\Bir(X)$.
Then, by \cite[Corollary 3.4]{RUvS25b} the inclusion $\Hilb_P\to \Bir(X)$ is a morphism and the sets $\Hilb_P$ give a partition of $\Bir(X)$.

By (2) above, a subset $\mathcal S$ has bounded degree if and only if there is a finite set of Hilbert polynomials $P_1,\ldots,P_\ell$ such that $\mathcal S\subseteq\cup_i \Hilb_{P_i}$.

\section{Proof of Theorem \ref{thm:Intro}}\label{sec:thmIntro}

We start by proving some preparatory lemmas. We refer to \cite{KM} for the basic notions on the MMP.

\medskip

The following lemma is a higher-dimensional version of the initial discussion in \cite{Serrano}. We prove that a (birationally) isotrivial fibration whose fibres are of general type is essentially a product.
The result is standard, but we include a proof here for lack of an appropriate reference.
\begin{lemma}\label{lem:isotr}
Let $f\colon Z\to C$ be a fibration from a smooth variety $Z$ to a curve $C$.
Assume that the general fibres of  $f$ are of general type  and birational to each other.
Let $F$ be the canonical model of a general fibre of $f$.
Then there is a curve $B$ and a group $G$ acting faithfully on $F$ and $B$
such that $C\cong B/G$ and  $Z$ is birational to $(F\times B)/G$ over $C$.
\end{lemma}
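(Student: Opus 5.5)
The approach is to pass to canonical models, use the finiteness of the automorphism group of a variety of general type to trivialize the family after a finite base change, and then descend along the Galois group.

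\emph{Step 1: canonical models.} Replace $Z$ by its relative canonical model over $C$. By finite generation of the relative canonical ring (BCHM), $Z$ is birational over $C$ to $Z_{\mathrm{can}}=\mathrm{Proj}_C\bigoplus_m f_*\cO_Z(mK_{Z/C})$. Over a dense open $U\subseteq C$ the fibres of $Z_{\mathrm{can}}\to U$ are the canonical models of the fibres of $f$. Canonical models are birational invariants, so all fibres over $U$ are isomorphic to $F$.

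\emph{Step 2: trivialization after a finite base change.} The family $Z_U\to U$ is a family of canonically polarized varieties with constant fibre $F$. Consider the isomorphism scheme $\mathcal I=\mathrm{Isom}_U(F\times U,Z_U)\to U$, which is a torsor under the group scheme $\Aut(F)\times U$. The group $\Aut(F)$ is finite because $F$ is of general type. Hence $\mathcal I\to U$ is finite étale and surjective, since every fibre is nonempty. Take an irreducible component $B^\circ$ of $\mathcal I$. The map $B^\circ\to U$ is finite étale, and after passing to the Galois closure we may assume it is Galois with group $G$. By construction there is a tautological isomorphism $F\times B^\circ\cong Z_U\times_U B^\circ$. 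This step is the main technical obstacle. One must check that $\mathrm{Isom}$ is representable and finite étale here, using the relative ample line bundle $K$, and also handle the case where the fibres are singular but have canonical singularities. If this becomes cumbersome, I will argue instead with the generic fibre: $Z_\eta$ is a $\overline{\IC(C)}/\IC(C)$-form of $F_{\IC(C)}$, and since $\Aut(F)$ is finite, this form splits over a finite Galois extension $L/\IC(C)$. We then take $B$ to be the smooth projective model of $L$.

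\emph{Step 3: descent.} The group $G$ acts on $Z_U\times_U B^\circ$ through the second factor. Transporting this action via the isomorphism with $F\times B^\circ$ gives $g\cdot(x,b)=(\rho(g,b)x,\,gb)$, where $\rho(g,b)\in\Aut(F)$. Since $\Aut(F)$ is discrete and $B^\circ$ is connected, $\rho(g,b)=\rho(g)$ does not depend on $b$. The cocycle condition then makes $\rho\colon G\to\Aut(F)$ a homomorphism. Therefore $Z_U\cong (F\times B^\circ)/G$ over $U$, where $G$ acts diagonally. Let $B$ be the smooth completion of $B^\circ$. The action of $G$ extends to $B$, and $B/G\cong C$ because both are smooth projective curves with the same function field. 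Hence $Z$ is birational over $C$ to $(F\times B)/G$. The action on $B$ is faithful because $G$ is the Galois group. To make the action on $F$ faithful as well, let $K=\ker\rho$ and replace $B$ by $B/K$ and $G$ by $G/K$. This changes neither the quotient nor $C$: we have $(F\times B)/G=(F\times B/K)/(G/K)$, and $G/K$ still acts faithfully on $B/K$.
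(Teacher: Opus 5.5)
Your argument is correct and has the same skeleton as the paper's proof: relative canonical model, trivialization after a finite base change, a homomorphism $\rho$ to the finite group $\Aut(F)$, then the quotient. The two middle steps are implemented differently.

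\emph{Trivialization.} The paper cites \cite[Theorem 2.11]{Kol87} and then uses ampleness of $K$ on the fibres to upgrade the resulting birational trivialization to an isomorphism over an open set. You construct the trivializing cover yourself, from $\mathrm{Isom}_U(F\times U,Z_U)$ or from forms of $F$ over $\IC(C)$.

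\emph{Descent.} The paper works topologically. It pulls back to the universal cover of an open $C_0\subseteq C$, reads off the monodromy $\pi_1(C_0)\to\Aut(F)$, takes $G$ to be its image, and algebraizes the cover attached to $\ker\rho$ via the Riemann existence theorem, so faithfulness on $F$ is built in. You do algebraic Galois descent on an arbitrary Galois trivializing cover and then divide by $K=\ker\rho$. Your $B/K$ is exactly the paper's $B$. Your version is self-contained and purely algebraic; the paper's is shorter because it outsources the trivialization.

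One claim in Step 2 needs repair. A pseudo-torsor under $\Aut(F)$ with nonempty closed fibres need not be flat or proper over $U$. So ``hence finite \'etale over $U$'' is not justified as written; over all of $U$ it would require properness of $\mathrm{Isom}$, i.e.\ separatedness for families of canonical models. You do not need this. The map $\mathcal I\to U$ is quasi-finite of finite type, and its image is constructible and contains every closed point, so it equals $U$. Hence some irreducible component of $\mathcal I$ dominates $U$, and after shrinking $U$ that component is finite \'etale over it. Shrinking is harmless for a birational statement. The same observation is what justifies the claim in your fallback that $Z_\eta$ becomes isomorphic to $F$ over a finite extension of $\IC(C)$, which you asserted without proof.
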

\begin{proof}
We perform an MMP relative over $C$, which terminates with a good minimal model by \cite{BCHM}. The canonical divisor on the endproduct of the MMP is thus relatively semiample and induces a contraction to a fibration $f'\colon Z'\to X$ where $K_{Z'}$ is relatively ample. After passing to $Z'$ and by uniqueness of the canonical model  , we can assume that $f$ is isotrivial and the general fibre is isomorphic to $F$.
By \cite[Theorem 2.11]{Kol87}  there is a finite map $\tau\colon\widetilde C\to C$ such that $Z\times_C\widetilde C$ is birational to $F\times \widetilde C$ over $\widetilde C$.

Since the canonical divisor of the general fibre is ample, the map $Z\times_C\widetilde C\dasharrow F\times \widetilde C$ is an isomorphism over an open set $\widetilde C_0\subseteq\widetilde C$. 
Let $C_0\subseteq C$ be an open set such that $\tau^{-1}C_0\subseteq\widetilde C_0$ and such that $\tau\colon \widetilde C_0\to C_0$ is \'etale.
Therefore, if $\Delta\subseteq C_0$ is a disc, the preimage $f^{-1}\Delta$ is isomorphic to $F\times \Delta$.
Moreover, if $\overline C_0$ is the universal cover of $C_0$,
the base change $Z_0\times_{C_0}\overline C_0$ is isomorphic to $F\times \overline{C_0}$, where $Z_0=f^{-1}(C_0)$.
Indeed, there is a factorization $\overline C_0 \to \widetilde C_0\to C_0$ and an isomorphism
$$Z_0\times_{C_0}\overline C_0 \cong(Z_0\times_{C_0}\widetilde{C_0})\times_{\widetilde{C_0}}\overline{C_0}\cong(F\times \widetilde C_0)\times_{\widetilde{C_0}}\overline{C_0}\cong F\times \overline C_0.$$
Moreover, there is an action of $\pi_1(C_0)$ on $F\times \overline C_0$ such that
$Z_0\cong (F\times \overline C_0)/\pi_1(C_0)$ and the induced action on $F$ is given by the monodromy.

\smallskip

With the local trivializations, we define a monodromy homomorphism $\rho\colon\pi_1(C_0)\to\Aut(F)$. Let $G$ be the image of $\rho$.
The group $G$ is finite because $F$ is of general type. The subgroup $\ker \rho$ has finite index in $\pi_1(C_0)$ and therefore corresponds to an \'etale cover $B_0\to C_0$ where $\pi_1(B_0)$ identifies with $\ker\rho$ (see for example \cite[Riemann existence theorem]{Milne}).

\noindent We denote by $Z_{B_0}$ the base-change $Z_0\times_{C_0}B_0$.
There is an action of $G\cong\pi_1(C_0)/\pi_1(B_0)$ on $Z_{B_0}$ such that $Z_0\cong Z_{B_0}/G$. Moreover,
$$Z_{B_0}\cong (F\times \overline C_0)/\pi_1(B_0)\cong (F\times \overline C_0)/\ker\rho\cong F\times (\overline C_0/\ker\rho)\cong F\times B_0.$$
We conclude that $Z_0\cong(F\times B_0)/G$.
The curve $B_0$ compactifies to a projective curve $B$ and, thus,
the variety $Z$ is birational to $(F\times B)/G$.
The group $G$ acts faithfully on $F$ because $G\subseteq \Aut(F)$ and on $B$ by the construction of $B$ as finite cover.
\end{proof}

In the next lemma, we prove that general complete intersections of codimension 2 inside a conic bundle are not birational.

\begin{lemma}\label{lem:nonisotr}
Let $Y$ be a smooth variety of dimension at least two embedded in a projective space $\mathbb P^m$. We denote by $\cO_Y(n)$ the restriction $\cO_{\IP^m}(n)\vert_Y$.
Let $a,b$ be positive integers and $\mathcal E=\cO_Y\oplus\cO_Y(a)\oplus\cO_Y(b)$.
Let $X\subseteq \IP_Y(\mathcal E)$ be such that the general fiber of the restriction $g=f\vert_X$ of the projection $f\colon \IP_Y(\mathcal E)\to Y$ is a rational curve. Let $A$ be a very ample divisor on $Y$.
Then there are two integers $\bar d_1,\bar d_2$ such that for every $d_i\geq \bar d_i$
if $H$ is such that $\cO_X(H)\sim \cO_{}(1)\otimes g^*\cO_Y(d_1 A)$ and
 $H_1\in |H|$ is a general section,
 two general sections $\Gamma, \Gamma'\in |(H+d_2 g^*A)\vert_{H_1}|$ are not birational.

\end{lemma}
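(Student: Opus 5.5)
The plan is to argue by contradiction. I will put $\Gamma,\Gamma'$ into a Lefschetz pencil, use Lemma \ref{lem:isotr} to show that birationality would force finite monodromy, and contradict this with the known largeness of the monodromy of Lefschetz pencils.

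\textbf{Step 1 (setup).} Set $n=\dim Y$, so that $\dim X=n+1$, $\dim H_1=n$ and $\dim\Gamma=n-1\geq 1$.
\begin{itemize}
\item First I will choose $\bar d_1$ so that $H$ is very ample on $X$, or at least on the smooth locus away from a small set. Then Bertini makes a general $H_1\in|H|$ smooth, or at least smooth in codimension where needed; I would resolve $X$ first if necessary.
\item Then I will choose $\bar d_2$ so that $L=(H+d_2g^*A)\vert_{H_1}$ is very ample on $H_1$.
\item Finally I will choose $\bar d_1,\bar d_2$ large enough that $K_\Gamma=(K_X+2H+d_2g^*A)\vert_\Gamma$ is ample, by adjunction. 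This works because $\cO(1)\otimes g^*\cO(d_1A)$ becomes as positive as needed. Hence a smooth general $\Gamma$ is of general type.
\end{itemize}
It suffices to show that two general members of a general pencil $\{\Gamma_t\}_{t\in\IP^1}\subseteq|L|$ are not birational. Indeed, two general members of $|L|$ lie on a general pencil.

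\textbf{Step 2 (reduce to finite monodromy).} Blow up the base locus of the pencil to get a fibration $Z\to\IP^1$, with $Z$ smooth, whose general fibres are the $\Gamma_t$. Suppose general fibres were pairwise birational. Then Lemma \ref{lem:isotr} gives a curve $B$ and a finite group $G$ with $\IP^1\cong B/G$ and $Z$ birational over $\IP^1$ to $(F\times B)/G$.

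Consider the birational invariant $H^0(\Gamma_t,K_{\Gamma_t})=H^{n-1,0}(\Gamma_t)$. These spaces form a local subsystem of the variation of Hodge structure $R^{n-1}\pi_*\IC$ over the open set $U$ of smooth fibres. On $(F\times B)/G$ this local system becomes trivial after the finite étale base change $B_0\to C_0$. Therefore the monodromy of $\pi_1(U)$ on $H^{n-1,0}(\Gamma_t)$ is finite.

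More robustly, the monodromy on the whole of $H^{n-1}(\Gamma_t,\mathbb Q)$ is finite as well. The reason is that the smooth fibres are birational to $F$ in a family that becomes constant after the finite base change, and the monodromy of a $\mathbb Q$-VHS is determined by its Hodge part up to finite index on the transcendental piece.

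\textbf{Step 3 (contradiction via Lefschetz theory).} For a general pencil in the very ample system $|L|$ on smooth $H_1$, the pencil is Lefschetz and has nodal fibres with nonzero vanishing cycles. This holds once $d_2$ is large, since then the vanishing cohomology $H^{n-1}(\Gamma)_{\mathrm{van}}$ is nonzero and has large rank.

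By Deligne's theorem (Weil II / SGA 7), the monodromy acts irreducibly on $H^{n-1}(\Gamma)_{\mathrm{van}}$, and its image is Zariski dense in the corresponding symplectic or orthogonal group. In the orthogonal case the image is either finite or dense, and it is not finite once the vanishing cohomology contains non-algebraic classes; this holds because $h^{n-1,0}(\Gamma)>0$ for $\Gamma$ of general type with $K_\Gamma$ ample and $d_2$ large.

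For the same reason, the vanishing part meets $H^{n-1,0}$ nontrivially. Since $h^{n-1,0}(H_1)$ is fixed while $h^{n-1,0}(\Gamma)\to\infty$, the monodromy on $H^{n-1,0}$ is infinite. This contradicts Step 2.

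\textbf{Main obstacle.} The delicate point is the bridge between Step 2 and Step 3. I need to transport finiteness of monodromy from the birational model $(F\times B)/G$ to the actual local system of the smooth fibres $\Gamma_t$. I would handle this by working only with $H^{n-1,0}$ and its complex conjugate, which are birational invariants. The transcendental sub-VHS they generate is then a birational invariant with finite monodromy. I would then check, using irreducibility of the vanishing cohomology, that this sub-VHS contains all of $H^{n-1}_{\mathrm{van}}$.

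A secondary technical point is ensuring smoothness of $H_1$ when $X$ is singular. Since $g$ is only required to have rational general fibres, I would first replace $X$ by a resolution, and note that the statement only concerns general members.
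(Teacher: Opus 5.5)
Your route (Lefschetz pencils and monodromy) is genuinely different from the paper's. When $H_1$ is smooth and $L$ is very ample on it, the argument can be made to work. As written, however, it has a real gap for singular $X$, and Step 2 contains an incorrect claim.

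\textbf{The gap: singular $X$.} The lemma does not assume $X$ smooth, and the $X'$ produced by Lemma \ref{lem:deviss} is only a Zariski closure. If $\dim\mathrm{Sing}(X)\geq 1$, then every $H_1\in|H|$ meets $\mathrm{Sing}(X)$ because $H$ is ample, and $H_1$ is singular there.

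Your remedy, passing to a resolution $\mu\colon W\to X$, removes exactly what Step 3 needs. On $\widetilde H_1$ the system $\mu^*L$ is big and base point free but does not give an embedding, so a general pencil in it is not a Lefschetz pencil. Already when $H_1$ is a surface with an isolated singular point $q$, the member through $q$ contains the exceptional curves over $q$ as components. You therefore lose the nodal singular fibres, the Picard--Lefschetz formula for the local monodromies, and the conjugacy of the vanishing cycles. With them goes the irreducibility of $E=I^\perp$, on which Step 3 rests. Also, $K_{\widetilde\Gamma}$ is no longer ample. So your argument only covers the case where $\mathrm{Sing}(X)$ is finite, so that a general $H_1$ is smooth.

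The paper has no such problem. After the same reduction to a pencil and Lemma \ref{lem:isotr}, it observes that $\overline\Gamma\times C\to(\overline\Gamma\times C)/G$ is \'etale in codimension one. Hence $(\overline\Gamma\times C)/G$ is the canonical model of $\widetilde H_1$, and the map $h$ to it is a birational contraction. So $h_*\widetilde\Gamma_d$ is big, whereas it is a fibre over a curve. Only bigness of divisors on a resolution is used, so singularities of $X$ are irrelevant and no Hodge theory is needed.

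\textbf{Step 2.} The spaces $H^{n-1,0}(\Gamma_t)$ form a holomorphic subbundle of $R^{n-1}\pi_*\IC\otimes\cO_U$, not a sub-local system. Their failure to be flat is exactly what the variation of Hodge structure measures. So ``the monodromy on $H^{n-1,0}$'' in Steps 2 and 3 is undefined, and the ``more robustly'' sentence is unjustified. The version in your last paragraph is the right one, and it should be carried out as follows.

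\begin{itemize}
\item Let $T$ be the smallest sub-local system whose fibres contain $H^{n-1,0}$.
\item Over an open subset of the base change $B_0$, take a smooth family mapping birationally onto both $Z\times_{\IP^1}B_0$ and $\widetilde F\times B_0$. The injective pullbacks identify $T$ with a constant local system, so $T$ has finite monodromy over $U$.
\item The splitting $H^{n-1}=I\oplus E$ is one of local systems and of Hodge structures. Hence the projection of $T$ to $E$ is a nonzero sub-local system, since it contains $E^{n-1,0}$.
\item $E^{n-1,0}\neq 0$ because $h^{n-1,0}(\Gamma)\to\infty$ while $h^{n-1,0}(H_1)$ stays fixed. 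General type alone would not give this.
\item By irreducibility the projection is all of $E$. So $E$ has finite monodromy, contradicting Deligne. You do not need $T\supseteq E$.
\end{itemize}

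In fact irreducibility alone suffices: under the isotriviality hypothesis, $E^{n-1,0}$ would be a flat, nonzero, proper sub-local system of $E\otimes\IC$.
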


\begin{proof}
Let $\mu\colon W\to X$ be a resolution of singularities and denote by $E$ the divisor of discrepancies, such that $$K_W=\mu^*K_X+E.$$
The divisor $\mu^*(H+kf^*A)$ is base point free and big for every $k> 0$.
Therefore, the general member of $|\mu^*(H+kf^*A)|$ is smooth and connected by the Bertini theorem for irreductibility~\cite{Jouanolou}, the Seidenberg theorem~\cite{Seidenberg}  and the generalized Seidenberg theorem~\cite[Theorem~1.7.1]{BS} and its proof. 
We fix  $H_1\in |H|$ and denote its pullback, which coincides with its strict transform, by $\widetilde H_1$.
We notice that the restriction of $g\circ\mu$ to $\widetilde H_1$ is generically finite.
By adjunction, we get 
$$K_X\sim(\cO(-1)\otimes g^*\cO_Y(D))\vert_X$$
for a suitable divisor $D$ on $Y$, and
$$K_{\widetilde H_1}=\mu^*(K_X\otimes\cO_X(1)\otimes g^*\cO_Y(kA)\otimes\cO_X(E))\vert_{\widetilde H_1}\sim\left(\mu^*g^*\cO_Y(D+kA)\otimes\cO_X(E)\right)\vert_{\widetilde H_1}.$$
This divisor is big if $k$ is big enough.
Let $\bar d_1$ be such that $K_{\widetilde H_1}$ is big.
Let $d\geq0$ and $H_{2,d}\in |H+dg^*A|$ and $\Gamma_d=H_1\cap H_{2,d}$.
Then the restriction $g\vert_{\Gamma_d}$ is birational onto its image.
We denote by $\widetilde H_{2,d}$ the strict transform of $H_{2,d}$ in $W$, which coincides with its pullback
and by $\widetilde \Gamma_d$ the intersection $\widetilde H_1\cap \widetilde H_{2,d}$.
Let $d_0$ be a positive integer such that for every $d\geq d_0$ the Kodaira dimension of $\mu^*(K_X+2H)+E+d\mu^*g^*(A)$ equals $\dim(Y)$.
By the adjunction formula, if $d\geq d_0$, the divisor $$K_{\widetilde \Gamma_d}\sim\mu^*(K_X+2H)\vert_{\widetilde \Gamma_d}+E\vert_{\widetilde \Gamma_d}+d\mu^*g^*(A)\vert_{\widetilde \Gamma_d}$$
is big.
Assume by contradiction that two general elements in $|\mu^*(H+d f^*A)\vert_{\widetilde H_1}|$ are birational to each other.
Let $\widetilde \Gamma_{d},\widetilde \Gamma'_{d}\in |\mu^*(H+d f^*A)\vert_{\widetilde H_1}|$ be two general elements and let $Z\to \widetilde H_1$ be a resolution of the pencil.
Let $\overline{\Gamma}$ be the canonical model of $\widetilde \Gamma_{d}$.
By Lemma \ref{lem:isotr}, there is a group $G$ and a curve $C$ such that $Z$ is birationally equivalent to $(\overline{\Gamma}\times C)/G$ and $G$ acts on $\overline{\Gamma}\times C$ diagonally and on $\overline{\Gamma}$ and $C$ faithfully.
Since $\widetilde H_1$, and therefore $Z$, is of general type, both $\Gamma$ and $C$ are of general type.
Since $G$ acts faithfully on both $\overline{\Gamma}$ and $C$, the projection morphism $\pi\colon \overline{\Gamma}\times C\to(\overline{\Gamma}\times C)/G$ is \'etale in codimension 1.

Hence, we have $K_{\Gamma\times C}=\pi^*K_{(\Gamma\times C)/G}$ and $K_{(\Gamma\times C)/G}$ is ample.
This implies that $(\Gamma\times C)/G$ is the canonical model of $\widetilde H_1$. Therefore, there is a map $h\colon \widetilde H_1\dasharrow (\Gamma\times C)/G$ which is a composition of divisorial contractions, flips and a final morphism from the endproduct of the MMP to the canonical model.
If $\Delta$ is a big divisor on $\widetilde H_1$, then $h_*\Delta$ is big.
On the other hand, we have $h_*\widetilde \Gamma_d=[\Gamma\times\{x\}]$ for some $x$
for a general member of the pencil.
This is a contradiction.

\end{proof}

\begin{lemma}\label{lem:deviss}
Let $X, Y$ be projective varieties and $X\to Y$ be a fibration whose general fibre is a rational curve.
Let $a,b$ be positive integers.
Then there exist a positive integer $m$, a variety $X'$ birational to $X$ and $Y'$ smooth birational to $Y$ and a diagram
$$
\xymatrix{
X\ar@{-->}[r]^{\alpha}\ar[d]&X'\ar@{^{(}->}[r]^<(.2){\gamma}\ar[d]&\IP_{\IP^m}(\cO\oplus\cO(a)\oplus\cO(b))\ar[d]\\
Y\ar@{-->}[r]^{\beta}&Y'\ar@{^{(}->}[r]^{\delta}&\IP^m
}
$$ 
where $\alpha,\beta$ are birational and $\gamma,\delta$ are embeddings.
\end{lemma}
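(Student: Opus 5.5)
Since the statement is purely birational, I would first replace $X\to Y$ by a conic bundle over a smooth model of the base, then embed that model in a fixed $\IP^m$, and finally embed the conic bundle in the prescribed $\IP^2$-bundle by choosing a rational section of the right line bundles.

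\textbf{Step 1 (a conic over the function field).} The generic fibre $X_\eta$ is a smooth geometrically rational curve over $K=\IC(Y)$. Hence $\omega_{X_\eta}^{-1}$ embeds it as a conic in $\IP^2_K$, defined by a ternary quadratic form. After a linear change of coordinates over $K$ we can diagonalize it as $q=u\,x_0^2+v\,x_1^2+w\,x_2^2$ with $u,v,w\in K^*$. So $X$ is birational over $Y$ to the closed subvariety of $\IP^2\times Y$ cut out by this equation.

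\textbf{Step 2 (the base).} By resolution of singularities, $Y$ is birational to a smooth projective $Y'$. Choose a very ample line bundle on $Y'$ and its embedding $\delta\colon Y'\hookrightarrow\IP^m$, with $m$ large. Then $\cO_{Y'}(a)$ and $\cO_{Y'}(b)$ are very ample and have many sections.

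\textbf{Step 3 (embedding into $\IP(\cO\oplus\cO(a)\oplus\cO(b))$).} Rescaling the coordinates $x_i$ by rational functions changes $u,v,w$ by squares times units. I would use this freedom so that the equation becomes a section of the right bundle. Concretely, in $P=\IP_{Y'}(\cO\oplus\cO(a)\oplus\cO(b))$, a quadratic form of type $x_0^2\otimes L_0+x_1^2\otimes L_1+x_2^2\otimes L_2$ with suitable twists is given by choosing a line bundle $M$ and sections $s_0\in H^0(M)$, $s_1\in H^0(M(2a))$, $s_2\in H^0(M(2b))$ (signs of twists depending on convention). The quadric is then $\{s_0x_0^2+s_1x_1^2+s_2x_2^2=0\}$. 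We need $s_1/s_0\equiv v/u$ and $s_2/s_0\equiv w/u$ modulo squares in $K$. Write $v/u=\varphi$, choose a general $t\in H^0(\cO_{Y'}(a))$, and set $\varphi t^2$, which is a rational section of $\cO(2a)$. Then choose $M$ sufficiently ample (a multiple of the hyperplane class on $\IP^m$ restricted to $Y'$, tensored to clear denominators) and a common denominator $s_0$, so that $s_0\cdot\varphi t^2$ and $s_0\cdot\psi t'^2$ are regular. Here $\psi=w/u$, the twist of the ambient $\IP^m$ is compatible with $a,b$, and I can replace $s_0$ by $s_0$ times a square to make all three sections regular. The resulting hypersurface $X'\subseteq P$ has generic fibre isomorphic to $X_\eta$, so it is birational to $X$ over $Y'$. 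Taking an irreducible component dominating $Y'$ if necessary, we obtain $\gamma$. Finally, the $\IP^2$-bundle over $Y'$ is the restriction of $\IP_{\IP^m}(\cO\oplus\cO(a)\oplus\cO(b))$, which gives the closed embedding into the right-hand column of the diagram.

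\textbf{Main obstacle.} The delicate point is the bookkeeping in Step 3. One must ensure that a single line bundle $M$ coming from $\IP^m$ clears all the poles, while the relative twists $2a$ and $2b$ are fixed in advance. This is handled by absorbing poles into $s_0$, which is possible because multiplying by squares of rational functions does not change the conic. One must also check that $X'$ is irreducible and reduced with generic fibre the given conic, so that $\alpha$ is birational.
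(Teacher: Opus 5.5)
Your proof is correct, but it takes a more hands-on route than the paper.

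\textbf{What the paper does.} It never writes down an equation. It takes a desingularisation $Y'\to Y$ and normalises $X\times_Y Y'$. It embeds the part over a dense open $U_1\subseteq Y'$ in the projectivised push-forward of the relative anticanonical sheaf. It then shrinks to an open $U$ over which both this $\IP^2$-bundle and $\IP(\cO\oplus\cO(a)\oplus\cO(b))$ are trivial, and identifies the two bundles over $U$ through $\IP^2\times U$. Finally $X'$ is the Zariski closure of the image in $\IP_{Y'}(\cO\oplus\cO(a)\oplus\cO(b))$. A closure is automatically closed, so the pole-clearing you flag as the main obstacle never arises.

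In fact, once you have your Step 1, you could finish the same way: there is an open $U$ with $\IP^2\times U\cong P|_U$, and you close up in $P$.

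\textbf{What your route buys.} You get an explicit global \emph{diagonal} equation $s_0x_0^2+s_1x_1^2+s_2x_2^2$ with $s_0\in H^0(M)$, $s_1\in H^0(M(2a))$ and $s_2\in H^0(M(2b))$. The construction works: choose $s_0$ vanishing on the polar divisors of $v/u$ and $w/u$, which is possible once $M$ is ample enough. This is closer to the form used in Section 4. The price is the bookkeeping, plus discarding vertical components over common zeros of the $s_i$.

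The paper's closure also has a global equation: it is a prime divisor of the smooth $P$, hence lies in some $|\cO_P(2)\otimes\pi^*M|$. That equation just need not be diagonal.

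\textbf{One small point.} In Step 1 you assert that $X_\eta$ is smooth. If the general fibre is a singular rational curve (e.g.\ nodal), the generic fibre is singular. You should first replace $X$ by its normalisation, as the paper does. Then $X_\eta$ is regular, hence smooth over the characteristic-zero field $K$. It is also geometrically integral because $f$ has connected fibres, so the anticanonical embedding as a conic applies.
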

\begin{proof}
Let $Y'\to Y$ be a desingularisation, $X''$ the normalisation of $X\times_Y Y'$ and $f\colon X''\to Y'$
the induced fibration.
The variety $Y$ is projective, thus it is embedded in a projective space $\IP^m$.
The sheaf $f_*\omega_{X''/Y'}$ is a rank 3 vector bundle over an open set $U_1$ of $Y'$.
Moreover, if $X_1=f^{-1}U_1$, then we have an embedding of $X_1$ in the projectivisation of $f_*\omega_{X''/Y'}$
$$
\xymatrix{
X_1\ar[rd]\ar@{^{(}->}[rr]&&\IP_{U_1}(f_*\omega_{X_1/U_1})\ar[ld]\\
&U_1&
}
$$
After possibly shrinking $U_1$, we can assume that $f_*\omega_{X_1/U_1}$ is a trivial vector bundle.
Let $U_2$ be a trivializing open set for $\cO(a)\vert_{Y'}$ and $\cO(b)\vert_{Y'}$.

Set $U=U_1\cap U_2$ and $X_0=f^{-1}U$.
We fix two isomorphisms $\varphi\colon \IP_{U}(f_*\omega_{X/U})\to \IP^2\times U$ and $\psi\colon \IP_{U}(\cO_U\oplus\cO(a)\vert_U\oplus\cO(b)\vert_U)\to \IP^2\times U$.
We have thus
$$
\xymatrix{
X_0\ar@{^{(}->}[r]^<(.2)i\ar[rd]&\IP_{U}(f_*\omega_{X/U})\ar[r]^<(.1){\psi^{-1}\circ\varphi}\ar[d]&\IP_{U}(\cO_U\oplus\cO(a)\vert_U\oplus\cO(b)\vert_U)\ar[ld]\\
&U&
}
$$
Finally, we set $X'$ to be the Zariski closure of  $\psi^{-1}\circ\varphi\circ i(X_0)$ in $\IP_{Y'}(\cO_{Y'}\oplus\cO(a)\vert_{Y'}\oplus\cO(b)\vert_{Y'})$.
\end{proof}

The following is Theorem \ref{thm:Intro}.

\begin{theorem}
Let $X, Y$ be projective varieties, let $f\colon X\to Y$ be a fibration whose general fibre 
is a rational curve.
 Then there exists an uncountable set $I$ and a group homomorphism
 $$\Bir(X/Y)\to\bigoplus_I\mathbb Z/2.$$
\end{theorem}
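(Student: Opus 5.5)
We reduce to the setting of \cite{BLZ} and then use Lemma \ref{lem:nonisotr} to produce uncountably many pairwise non-equivalent marked conic bundles. Since $\Bir(X/Y)$ depends only on the generic fibre, it is invariant under birational modifications over $Y$. By Lemma \ref{lem:deviss} we may therefore replace $f\colon X\to Y$ by $X'\subseteq \IP_{\IP^m}(\cO\oplus\cO(a)\oplus\cO(b))$ over a smooth $Y'\subseteq\IP^m$, with $a,b>0$ chosen freely. After this step we are in the situation of Lemma \ref{lem:nonisotr}, and also of Subsection \ref{sseq:involutions}, up to working on an open set where $X'$ is a conic bundle in a $\IP^2$-bundle.

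The homomorphism is the one of \cite[Theorem B and D]{BLZ}. There it is shown that $\Bir(X/Y)$ admits a surjection onto $\bigoplus_I\IZ/2$, where $I$ is the set of equivalence classes of marked conic bundles $(X/Y,\Gamma)$ of \cite[Definition 3.22]{BLZ}. The map sends a type II link $\tau_s$ blowing up $Z=s(Y)\cap X$ to the basis vector of the class of $(X/Y,f(Z))$, and kills the other generators. The only remaining task is to show that $I$ is uncountable. The key point is that two marked conic bundles $(X/Y,\Gamma)$ and $(X/Y,\Gamma')$ can be equivalent only if $\Gamma$ and $\Gamma'$, or rather the corresponding covers of the base divisor (the irreducible codimension-$2$ subvarieties $Z\subseteq X$ birational to their images), are birational to each other. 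I expect to extract this from the definition in \cite{BLZ}, since equivalence is defined via a birational map over $Y$ which identifies the loci.

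Now fix $d_1\ge\bar d_1$, $d_2\ge\bar d_2$ as in Lemma \ref{lem:nonisotr}, and a general $H_1\in|H|$. The members $\Gamma$ of the linear system $|(H+d_2g^*A)\vert_{H_1}|$ are codimension $2$ in $X$. By the proof of that lemma $g\vert_\Gamma$ is birational onto its image, which is a divisor of $Y$. Each such $\Gamma$ arises as $s(Y)\cap X$ for a suitable rational section $s$, namely the one cut out by $H_1$ and $H_{2}$ in $\IP^2$-fibres; this gives an involution $\tau_s$ with $\Phi(\tau_s)$ equal to the class of $(X/Y,g(\Gamma))$. By Lemma \ref{lem:nonisotr}, two general members of this positive-dimensional linear system are not birational. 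Since $\Gamma$ is of general type, each birational class of $\Gamma$ meets only countably many members (a countable union of proper closed subsets, using boundedness of birational maps between varieties of general type). The parameter space is uncountable, so uncountably many members are pairwise non-birational, and hence $I$ is uncountable.

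The main obstacle is the countability step: showing that the set of $\Gamma$'s birational to a fixed one is a countable union of proper subvarieties of the linear system, so that the generic non-birationality of Lemma \ref{lem:nonisotr} upgrades to uncountably many classes. I would first try the standard argument: the locus of pairs $(\Gamma,\Gamma')$ with $\Gamma$ birational to $\Gamma'$ is a countable union of closed subsets, via relative Hilbert schemes of graphs. Lemma \ref{lem:nonisotr} says none of these contains the generic point of the product, so each fibre over a fixed $\Gamma$ is a countable union of proper closed subsets. Combined with the matching of the equivalence of marked conic bundles with birationality of $\Gamma$, this gives the theorem.
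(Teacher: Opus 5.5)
Your proposal is correct and follows the paper's proof. You reduce via Lemma \ref{lem:deviss} to the decomposable case, take the involutions of \cite[Proposition 6.9]{BLZ} centred at $\Gamma=H_1\cap H_2$ with $H_1\in|H|$ fixed general and $H_2\in|H+d_2g^*A|$ varying, and use Lemma \ref{lem:nonisotr} with the definition of equivalence of marked conic bundles to separate the classes $(X/Y,g(\Gamma))$. Your countable-union argument spells out the step from ``two general members are not birational'' to ``$I$ is uncountable'', which the paper leaves implicit: birational pairs form a countable union of constructible, non-dense subsets of $T\times T$, where $T$ is the parameter space of the $\Gamma$'s, so $T$ cannot be exhausted by countably many classes. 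Drop the inaccurate phrase that a birational class meets ``only countably many members'', since what you actually use and justify is the countable-union-of-proper-closed-subsets statement.
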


\begin{proof}
By Lemma \ref{lem:deviss}, we can assume that $X\to Y$ decomposable in the sense of \cite[Definition 6.8]{BLZ}.
We follow the proof of \cite[Proposition 6.9]{BLZ} where the group homomorphism is defined.
We only need to prove that the index set $I$ is uncountable.
By Lemma \ref{lem:nonisotr} there are two integers $\bar d_1,\bar d_2$ such that for every $d_i\geq \bar d_i$
if $H$ is such that $\cO_X(H)\sim \cO_{}(1)\otimes f^*\cO_Y(d_1 A)$ and
 $H_1\in |H|$ is a general section,
 two general sections $\Gamma, \Gamma'\in |(H+d_2 f^*A)\vert_{H_1}|$ are not birational.
 We fix such $d_1,d_2$.

In the proof of \cite[Proposition 6.9]{BLZ}, 
instead of chosing two sections in the same linear system $| H_0 +dF|$,
we chose $H_1\in| H_0 +d_1F|$ and $H_2\in| H_0 +d_2F|$ with 
we chose thus $d\geq m_0$.
Therefore, if  $\Gamma, \Gamma'$ are two complete intersections of the form $H_1\cap H_2$ with $H_2\in| H_0 +d_2F|$ general, then $\Gamma, \Gamma'$ are not birational and the models $(X/Y,f(\Gamma))$, $(X/Y,f(\Gamma'))$ are not in the same equivalence class in the sense of \cite[Definition 2.12]{BLZ}.
Moreover, their classes have non-vanishing image in the direct sum $\displaystyle \oplus_{M(X/Y)}\mathbb Z/2$.
\end{proof}

We are also ready to prove Corollary \ref{cor:ubdd gen}.

\begin{proof}[Proof of Corollary \ref{cor:ubdd gen}]

By Lemma \ref{lem:deviss} we can assume that $X$ is embedded in a projective bundle and the fibration is the restriction of the natural fibration.

\medskip

Let $P$ be a Hilbert polynomial such that $\Hilb_P$ defined in \eqref{ssec:top} is non empty.
Let $u\colon\mathcal U_P\to \Hilb_P$ be the universal family and $p_i\colon \mathcal U_P\to X$ the natural maps.
Then $(p_2,u)\colon\mathcal U_P\to X\times \Hilb_P$ is a birational morphism.
Let $\mathbb E$ be the exceptional locus of $(p_2,u)$.
Notice that for every $\varphi\in \Hilb_P$ the fiber  $\mathbb E_\varphi$ overt $\varphi$ is the exceptional locus of $\Gamma_{\varphi}\to X$ where  $\Gamma_{\varphi}$ denotes the graph of $\varphi$ and contains the strict transforms of all the divisors contracted by $\varphi$.
We set $d(P)$ as
$$\sup\left\{
\begin{array}{l|l}
& \text{ there is }\varphi\in \Hilb_P,\\
covgon(\Gamma)&\; \text{ there is } E\text{ irreducible component of }\mathbb E_\varphi,\;\\
&E\text{ birational to }\IP^1\times\Gamma
\end{array}
 \right\}.$$
Since $\Hilb_P$ is quasi-projective, the sup is a max.
By \cite[Proposition 6.9]{BLZ} and its proof, since $X$ carries a conic fibration structure, the set of integers $d(p)$ is unbounded.
This proves that there is no set of generators of bounded degree.

\smallskip

By Lemma \ref{lem:nonisotr} and \cite[Proposition 6.9]{BLZ} there are two integers $\bar d_1, \bar d_2$ such that for any $d_i\geq \bar d_i$, letting 
$H$ be such that $\cO_X(H)\sim \cO_{}(1)\otimes f^*\cO_Y(d_1 A)$ and
 $H_1\in |H|$  a general section,
for every  $\Gamma\in |(H+d_2 g^*A)\vert_{H_1}|$ there is an ivolution $\sigma_{\Gamma}\in \Bir(X/Y)$ contracting a divisor birational to $\Gamma\times\IP^1$.
 
Moreover, two general sections $\Gamma, \Gamma'\in |(H+d_2 g^*A)\vert_{H_1}|$ are not birational and if $d_i$ is high enough $\Gamma\times\IP^1$ and $\Gamma'\times\IP^1$ are not birational. This proves the second part of the statement.
\end{proof}

\section{The automorphism group of a quadric and the relative bir of a conic bundle}\label{sec:spinor}

Let $F$ be a field and
let $Z$ be a smooth quadric in $\IP^n(F)$, let $Q=0$ be an equation for $X$.
Then $F^n$ with the quadratic form $Q$ is a regular vector space.
The orthogonal group surjects onto
the automorphism group of $Z$ and acts by projectivities:
$$
\begin{array}{rcc}
\orth(F^n)\times \IP^n(F)&\longrightarrow&\IP^n(F)\\
(g,[x])&\mapsto&[g(x)].
\end{array}
$$

The action above induces a surjective homomorphism

$$\Pi\colon\orth(F^n)\to\Aut(Z).$$

Throughout this section we will use the following notation and make the following assumptions.
\begin{notass}
Let $X,Y$ be normal varieties and $f\colon X\to Y$ a conic bundle. 
There is a rank 3 vector bundle $\mathcal E$ such that $X\subseteq \IP(\mathcal E)$ and $f$ is the restriction of the natural morphism  $\pi\colon\mathcal E\to \IP(\mathcal E)$.
Set $F=\IC(Y)$.
\end{notass}

\medskip

Then, the fibre $X_{\eta}$ over the generic point $\eta$ of $Y$ is a smooth quadric in $\IP^2(F)$.
The group $\Bir(X/Y)$ is isomorphic to the automorphism group $\Aut(X_\eta)$ of the generic fibre, and therefore there is a surjective homomorphism
$\orth(F^3)\to\Bir(X/Y)$.

Let $\sigma\in \orth(V)$ be a symmetry.
Then there is $v\in F^3$ such that $Q(v)\neq 0$ such that
$$\sigma([x])=\left[x-\frac{2B(x,v)}{Q(v)}v\right]=[Q(v)x-2B(x,v)v].$$

\begin{lemma}\label{lem:symmetries}
%Let $X,Y$ be normal varieties and $f\colon X\to Y$ a conic bundle.
%Assume that there is a rank 3 vector bundle $\mathcal E$ such that $X\subseteq \IP(\mathcal E)$ and $f$ is the restriction of the natural morphism.
%Let $F=\IC(Y)$ and 
Let $\sigma\in\orth(F^3)$ be a symmetry.
Then there is a rational section $s\colon Y\dasharrow \mathcal E$ such that $\sigma=\tau_s$ where $\tau_s$ is defined in \eqref{sseq:involutions}.
\end{lemma}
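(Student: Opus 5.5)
The idea is to take $s$ to be the section given by the vector $v$ defining $\sigma$, and then check fibrewise that the reflection in $v$ is the involution obtained by projecting from $s(y)$.

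\textbf{Setting up the section.} Write $\sigma=\sigma_v$ with $v\in F^3$ and $Q(v)\neq 0$. After trivializing $\mathcal E$ over a dense open set $U\subseteq Y$, the coordinates of $v$ are rational functions on $Y$. They therefore define a rational section $s\colon Y\dasharrow \mathcal E$, hence also a rational section $[s]\colon Y\dasharrow\IP(\mathcal E)$. Its image is not contained in $X$: for general $y$ we have $Q(v)(y)\neq 0$, so $s(y)\notin X_y$. Since $\sigma_v=\sigma_{\lambda v}$ for every $\lambda\in F^*$, the choice of trivialization and of scaling of $v$ does not affect $\sigma$. We may then shrink $U$ so that $v$ is regular and $Q(v)$ is invertible on $U$.

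\textbf{Fibrewise comparison.} Fix a general $y\in U$ and write $q=Q_y$ and $p=s(y)$, so that $q(p)\neq0$. Let $x$ be a point of the conic $X_y=\{q=0\}$. The line through $[p]$ and $[x]$ meets $X_y$ in the points $[x+tp]$ with $q(x+tp)=q(x)+2tB(x,p)+t^2q(p)=0$. Since $q(x)=0$, the roots are $t=0$ and $t=-2B(x,p)/q(p)$. The residual intersection point is therefore
\[
[q(p)x-2B(x,p)p]=[\sigma_v(x)]_y .
\]
This is exactly the formula for $\sigma([x])$ recorded just before the statement. So on the general fibre $\eta^{-1}(y)\cap X$, the map $\Pi(\sigma)$ coincides with the involution induced by projection from $s(y)$, which is the definition of $\tau_s$ in \S\ref{sseq:involutions}.

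\textbf{Passing to $\Bir(X/Y)$.} Two elements of $\Bir(X/Y)$ that agree on a dense set of fibres are equal. Equivalently, both induce the same automorphism of $X_\eta$, because the computation above is in fact an identity over $F$ on the generic fibre. Hence $\Pi(\sigma)=\tau_s$.

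\textbf{Main obstacle.} The only real subtlety is bookkeeping: $\mathcal E$ is only locally trivial, and the section lives in $\mathcal E$ rather than in $\IP(\mathcal E)$. I expect to handle this by working entirely over the generic point, where $\mathcal E_\eta\cong F^3$ and $Q$ is the quadratic form attached to $X_\eta$. Any $F$-vector $v$ then defines a rational section of $\mathcal E$. The identification of $\Bir(X/Y)$ with $\Aut(X_\eta)$ carries $\tau_s$ to the projection involution of the conic $X_\eta$ from the $F$-point $[v]$, and the computation above carries it to $\sigma_v$.
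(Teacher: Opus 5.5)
Your proposal is correct and is essentially the paper's proof. You take $s$ to be the section defined by $v$, which is not contained in $X$ since $Q(v)\neq 0$, and you check that the second intersection point of the line through $x$ and $s(y)$ with the conic is $\sigma_v(x)$, so that $\sigma_v$ is the covering involution of the projection from $s$. The only difference is that you carry out explicitly, via the roots of $q(x)+2tB(x,p)+t^2q(p)$, the computation the paper leaves as ``easy to check''.
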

\begin{proof}
Let $U$ be an affine open set, and $\pi\colon\mathcal E\to \IP(\mathcal E)$ the natural projection. Let $Q$ be an equation of $\pi^{-1}X$ over $U$.
If $\sigma$ is a symmetry, then there is $v\in F^3$, $Q(v)\neq 0$ such that $\sigma=\sigma_v$. The element $v$ defines a rational section $s\colon Y\dasharrow \mathcal E$ whose image is not contained in  $\pi^{-1}X$ because $Q(v)\neq 0$.

If we set $x_1 \in X$, $\sigma$ is defined at $x_1$ and $x_2=\sigma(x_1)$, then it is easy to check, using the definition of $\sigma$, that
the intersection of $X$ with the line through $x_1$ and $\pi(v)$ is the set $\{x_1,x_2\}$. Thus $\sigma$ is the automorphism covering of the $2:1$ map induced by the projection from $s$. It coincides with $\tau_s$ defined in \eqref{sseq:involutions}.
\end{proof}

\subsection{Birational involutions and the spinor map}
%Let $f\colon X\to Y$ be a conic bundle.
%Assume that there is a rank three vector bundle $\mathcal E$ such that 
%$$
%\xymatrix{
%X\ar[rd]\ar[rr]&&\IP(\mathcal E)\ar[ld]\\
%&Y&
%}
%$$

Over an affine set $\mathrm{Spec}A\cong U\subseteq Y$ where $\mathcal E$ is trivial, the field $F$ is the quotient field of $A$.
Let $Q$ be a local equation of $\pi^{-1}X$.
After possibly shrinking $U$, we may assume that $Q$ defines a morphism $Q\colon \mathcal E\vert_U\to \IC$.
An element $v\in F^3$ such that $Q(v)\neq 0$ induces a birational section
$s\colon Y\dasharrow \mathcal E$ such that $s(Y)$ is not contained in $\pi^{-1}X$.

\smallskip

Then, for every section $s\colon U\dasharrow \mathcal E\vert_U$
there is a rational function $Q\circ s$ on $Y$ obtained by composing with the bilinear form $Q\colon\mathcal E\vert_U\cong U\times \IC^3\to\IC$.
%Moreover, if $y_1, y_2 \in \mathcal E\vert_U$
%induce the same birational section of $\IP(\mathcal E)$, then
% there is $\lambda\in F$ non zero, with $s_{y_2}=\lambda s_{ y_1}$.
% In this case $Q\circ s_{y_2}=\lambda^2 Q\circ s_{y_1}$.
% Thus, for a birational section $s_{[y]}\colon Y\dasharrow \IP(\mathcal E)$,
% the class $\overline{Q\circ \tilde s_y}\in \stackrel{.}{F}/(\stackrel{.}{F})^2$ is well defined,
% where $\tilde s_y\colon Y\dasharrow \mathcal E$ is any lift of $s_{[y]}$ over an open set
% and $\overline{\beta}$ denotes the class of $\beta\in \stackrel{.}{F}$ in the quotient.
 
 By combining Lemma \ref{lem:symmetries} with the previous discussion, we get the following
 \begin{lemma}\label{lem:spinorII}
% Let $X,Y$ be normal varieties and $f\colon X\to Y$ a conic bundle.
%Assume that there is a rank 3 vector bundle $\mathcal E$ such that $X\subseteq \IP(\mathcal E)$ and $f$ is the restriction of the natural morphism.
Let $Q$ be a local equation of $\pi^{-1}X$ in $\mathcal E$.
%where $\pi\colon\IP(\mathcal E)\to \mathcal E$.
Let  $\sigma\in\orth(F^3)$ be a symmetry.
Let $s$ be a rational section of $\mathcal E$ be such that $\sigma=\tau_s$.
Then the image of $\sigma$ under the spinor map coincides with the class of $Q\circ s$ in $F^*/(F^*)^2$.
 \end{lemma}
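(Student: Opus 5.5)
The plan is to unwind the definitions: the spinor norm of a single symmetry $\sigma_v$ is by definition $\overline{Q(v)}$. The statement therefore reduces to showing that $Q\circ s$ and $Q(v)$ agree up to a nonzero square in $F$, whenever $s$ is a rational section with $\tau_s=\sigma=\sigma_v$.

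First, by Lemma \ref{lem:symmetries}, write $\sigma=\sigma_v$ with $v\in F^3$ and $Q(v)\neq 0$. Over the trivializing affine $U=\mathrm{Spec}A$ the vector $v$ is exactly a rational section $s_v\colon Y\dasharrow\mathcal E$, and the rational function $Q\circ s_v$ is literally the element $Q(v)\in F$. Hence $\theta(\sigma)=\overline{Q(v)}=\overline{Q\circ s_v}$.

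Second, I need independence of the choice of $s$. Suppose $s$ is another rational section of $\mathcal E$ with $\tau_s=\sigma$. Then $s$ corresponds to some $w\in F^3$ with $Q(w)\neq0$ and $\sigma_w=\sigma_v$. The key step is to show that $w=\lambda v$ for some $\lambda\in F^*$. Geometrically, $\tau_s$ on the generic fibre $X_\eta$ is the involution induced by projection from the point $[w]\in\IP^2(F)$, not on the conic. Such a point is recovered from the involution as the common intersection point of the lines joining $x$ and $\sigma(x)$. Algebraically, $\sigma_w$ has $-1$-eigenspace $F\cdot w$, since it acts as $-1$ on $w$ and as the identity on $w^\perp$, which is a complement because $Q(w)\neq0$. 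So $\sigma_w=\sigma_v$ forces $Fw=Fv$.

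Then $Q\circ s=Q(\lambda v)=\lambda^2Q(v)$, so the classes in $F^*/(F^*)^2$ coincide. Finally, changing the local trivialization of $\mathcal E$, or passing to a different local equation $Q$ of $\pi^{-1}X$, multiplies things by units and squares. The statement is intrinsically phrased with a fixed local $Q$, matching how the spinor norm is defined on $\orth(F^3)$ with that $Q$, so nothing more is needed there.

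The only point needing care is the eigenspace argument, i.e.\ checking that $\tau_s$ determines $[s]$. If one worries about $\sigma$ only being defined in $\Aut(X_\eta)$ rather than in $\orth(F^3)$, I would note that the statement takes $\sigma\in\orth(F^3)$. Within $\orth(F^3)$, equality $\sigma_w=\pm\sigma_v$ would only add $-\sigma_v$, which has determinant $+1$ and so is not a symmetry; hence no ambiguity arises. The main obstacle is therefore just this identification of the centre of projection with the $-1$-eigenline, which is routine.
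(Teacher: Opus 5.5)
Your proposal is correct and follows the paper's own (essentially one-line) argument: write $\sigma=\sigma_v$, note that the section determined by $v$ satisfies $Q\circ s=Q(v)$, and use $\theta(\sigma_v)=\overline{Q(v)}$ by definition. You also check that the class does not depend on which $s$ with $\tau_s=\sigma$ is chosen: you get $w=\lambda v$ from the $-1$-eigenline, having used the determinant to rule out $\sigma_w=-\sigma_v$. The paper leaves this point implicit.
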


\subsection{A homomorphism to a direct sum of $\IZ/2\IZ$}\label{sec:div}
 
We compute now the principal divisor $\mathrm{div}(Q\circ s)$ associated to the rational function $Q\circ s$.\\

We will make the further assumption that there is an embedding $Y\subseteq \IP^m$
and that $\mathcal E$ is the restriction to $Y$ of the vector bundle
$$\mathcal F=\cO_{\IP^m}\oplus\cO_{\IP^m}(a)\oplus\cO_{\IP^m}(b)$$
to $Y$ for $a,b$ non negative integers.\\
We see $\mathcal F$ as the quotient of $\mathbb A^3\times \mathbb A^{m+1}\setminus\{0\}$
by the action of $\mathbb G_m$
$$\mu\cdot(x_0,x_1,x_2,y_0,\ldots,y_m)=(x_0,\mu^{-a}x_1,\mu^{-b}x_2,\mu y_0,\ldots,\mu y_m).$$
A compactification of $\mathcal F$ is given by
and $\overline{\mathcal F}=\IP(\cO_{\IP^m}\oplus\cO_{\IP^m}\oplus\cO_{\IP^m}(a)\oplus\cO_{\IP^m}(b))$, which we see
as the quotient of $\mathbb A^4\setminus\{0\}\times \mathbb A^{m+1}\setminus\{0\}$
by the action of $\mathbb G_m^2$
$$(\lambda,\mu)\cdot( v,x_0, x_1, x_2,y_0,\ldots,y_m)=(\lambda v,\lambda x_0,\lambda\mu^{-a}x_1,\lambda\mu^{-b}x_2,\mu y_0,\ldots,\mu y_m).$$

The variety $X$ is thus defined in $\IP(\mathcal E)$
by an equation of the form
$$Q(x,y)=\sum_{0\leq i\leq j\leq 2}\alpha_{i,j}(y)x_i x_j=0$$
where 

$$
\begin{array}{lll}
\alpha_{0,0}\in H^0(Y,\mathcal O_Y)&\alpha_{0,1}\in H^0(Y,\mathcal O_Y(a))&\alpha_{0,2}\in H^0(Y,\mathcal O_Y(b))\\
\alpha_{1,1}\in H^0(Y,\mathcal O_Y(2a))&\alpha_{1,2}\in H^0(Y,\mathcal O_Y(a+b))&\alpha_{2,2}\in H^0(Y,\mathcal O_Y(2b)).
\end{array}
$$

We notice that $Q$ is homogeneous with respect to the action of $\mathbb G_m$, thus it defines a morphism $Q\colon \mathcal F\to \IC$.
Moreover, we have a morphism

$$
\begin{array}{rccc}
\bar Q\colon&\overline{\mathcal F}&\to&\IP^1\\
&[v:x_0:x_1:x_2;y_0:\ldots:y_m]&\mapsto&[v^2:Q(x,y)].
\end{array}
$$

Let $U\subseteq Y$ be the maximal open set such that $s_{y}\colon U\to \mathcal E\vert_U$ is a regular section.

Therefore $Q\circ s_{y}$ is regular on $U$ and the support of $\mathrm{div}(Q\circ s_{y})\vert_U$ is the locus $s_y\left(s_y(Y)\cap \{Q=0\}\right)$.

Set $\overline{\mathcal E }=\IP_Y(\cO_Y\oplus\cO_Y\oplus\cO_Y(a)\oplus\cO_Y(b))$.
Then $\overline{\mathcal E }$ is a compactification of $\mathcal E$ and
we set $\Delta=\overline{\mathcal E }\setminus \mathcal E$.

Set $\overline{s(Y)}$  the Zariski closure of $s(Y)$ in $\overline{\mathcal E }$ and $g\colon \overline{s(Y)}\to Y$ the restriction of the projection.
Then  the intersection $\Delta\cap \overline{s(Y)}$ has dimension equal to $\dim Y-1$. Its projection in $Y$ coincides with the poles of $Q\circ s$ over an open subset $Y_0$ of $Y$ whose complement has codimension at least two in $Y$.
More precisely, 
\begin{equation}\label{eq:div}
\mathrm{div}(Q\circ s)\vert_{Y_0}=g_*\left(\bar Q^*(0)\right)-g_*\left(\bar Q^*(\infty)\right)=g\left(s(Y)\cap \{Q=0\}\right)-2g_*(\Delta).
\end{equation}

\bigskip 
 
% Let $\mathcal P$ be the set of prime divisors in $Y$.
%There is a homomorphism $$\Phi\colon\stackrel{.}{F}/(\stackrel{.}{F})^2\longrightarrow \bigoplus_{\mathcal P}\IZ/2\IZ$$ 
%sending the class of $\varphi\in\IC(Y)\setminus\{0\}$ with divisor $\mathrm{div}(\varphi)=\sum_{P\in\mathcal P}n_P P$ to the sequence $(\varepsilon(P))_{P\in\mathcal P}$
%where $\varepsilon(P)$ is the class of $n_P$ mod 2.

%The image of $Q\circ s_y$ via $\Phi$ is the set $s_y(Y)\cap \{Q=0\}$.

%Lemma 6.7. Let B be a smooth variety, ˆη : P B a locally trivial P2-bundle,
%and X ⊂ P a closed subvariety such that the restriction of ˆη is a conic bundle
%η : X B. Let s: B P be a rational section (i.e. a rational map, birational
%to its image, such that ˆη ◦ s = idB), whose image is not contained in X. Let
%ι ∈ Bir(X/B) be the birational involution whose restriction to a general fibre η−1(b)
%is the involution induced by the projection from s(b). Let Γ ⊂ B be an irreducible
%hypersurface not contained in the discriminant locus of η, and let F be a local
%equation of X in P.
%If the multiplicity of F(s) along Γ is equal to 0 or 1, then the parity of ι along Γ
% (in the sense of Definition 6.1) is equal to this multiplicity (modulo 2).

\subsection{Proof of Theorem \ref{thm:spinor}}
We are ready to prove Theorem \ref{thm:spinor}.
\begin{proof}
Since symmetries generate $\Bir(X/Y)$, it is enough to prove that $\Phi$ and $\xi\circ\overline{\mathrm{div}}\circ\theta$ coincide on a symmetry $\sigma$.\\
By Lemma \ref{lem:deviss}, we can assume that $Y$ is embedded in $\IP^m$, $X$ in $\IP_Y(\cO_Y\oplus\cO_Y(a)\oplus\cO_Y(b))$ and the conic fibration is the restriction of the projection $\IP_Y(\cO_Y\oplus\cO_Y(a)\oplus\cO_Y(b)) \to Y$.
We set $\pi\colon \cO_Y\oplus\cO_Y(a)\oplus\cO_Y(b)\to \IP_Y(\cO_Y\oplus\cO_Y(a)\oplus\cO_Y(b))$.\\
Let $Q=0$ be a local equation of the preimage of $\pi^{-1}X$ in  $\cO_Y\oplus\cO_Y(a)\oplus\cO_Y(b)$. 

\medskip

\noindent By Lemma \ref{lem:symmetries} there is a section $s\colon Y\dasharrow\cO_Y\oplus\cO_Y(a)\oplus\cO_Y(b) $ such that $\sigma=\tau_s$.\\
By Lemma \ref{lem:spinorII} the image of $\tau_s$ via the spinor map is the class of $Q\circ s$ in $\IC(Y)^*/(\IC(Y)^*)^2$.\\
By \eqref{sec:div} and \eqref{eq:div}, we have
$$\overline{\mathrm{div}}(Q\circ s)=f_*(\pi_*\left((Q=0)\cap \overline{s(Y)}\right)).$$

By \cite[Lemma 6.7]{BLZ} the image of this divisor with coefficients in $\IZ/2\IZ$ coincides with the image of $\tau_s$ via $\Phi$.

\end{proof}

\bibliographystyle{alpha}
\bibliography{biblioUBir}

\end{document}